\newcommand{\nn}{\mathbb{N}}
\newcommand{\zz}{\mathbb{Z}}
\newcommand{\qq}{\mathbb{Q}}
\newcommand{\cc}{\mathbb{C}} 
\newcommand{\pp}{\mathbb{P}} 
\newcommand{\lra}{\longrightarrow}                         
\newcommand{\ra}{\rightarrow}
\newcommand{\xra}{\xrightarrow}
\newcommand{\gen}[1]{\left<#1\right>}%
\newcommand{\set}[1]{\left\{ #1 \right\}}%
\newcommand{\abs}[1]{\left\vert #1 \right\vert}%
\renewcommand{\aa}{\mathbb{A}}%
\newcommand{\orb}{\text{orb}}%
\newcommand{\mbf}{\mathbf}
\newcommand{\bs}{\boldsymbol}
\newtheorem{thm}{Theorem}[section]%
\newtheorem{lem}[thm]{Lemma}%
\newtheorem{prop}[thm]{Proposition}%
\theoremstyle{definition} 
\newtheorem{rem}[thm]{Remark}%
\newtheorem{overview}[thm]{Overview}
\newtheorem{convention}[thm]{Convention}
\newtheorem*{ack}{Acknowledments}
\DeclareMathOperator{\Hom}{Hom}%
\DeclareMathOperator{\lcm}{lcm}%
\DeclareMathOperator{\spec}{Spec}
\DeclareMathOperator{\im}{Im}
\title{Mirror fibrations and root stacks of weighted projective spaces}%
\author{Ignacio de Gregorio and \'Etienne Mann}%
\date{18 October, 2007}
\begin{document}

\maketitle

\abstract{We show that the orbifold Chow ring of a root stack over a well--formed weighted
  projective space can be naturally seen as the Jacobian algebra of a function on a
  singular variety. }

\section{Introduction}
\label{sec:introduction}

According to A.~Givental (\cite{Gmttci98}) and S.~Barannikov (\cite{Bms}), the mirror
partner of the projective space $\pp^n$ is the function $f_1=x_0+\dots+x_n$ on the torus
defined by $x_0\cdots x _n=1$. This mirror theorem states an isomorphism between the
Frobenius manifolds obtained by unfolding $f_{1}$ and the quantum cohomology of $\pp^n$.
To explain the motivation behind this article let us think of $f=x_0+\dots+x_n$ as defined
on the fibration $\pi:\aa^{n+1}\ra\aa^1$ given by $\pi(x_0,\dots,x_n)=x_0\cdots x_n$.
From this point of view, it is natural to consider deformations of $f$ as unfoldings
$F(x,t):\aa^{n+1}\times\aa^k\ra\cc$ satisfying $F(x,0)=f(x)$, together with the
equivalence relation induced by commutative diagrams
\begin{equation}
  \label{eq:1}
  \begin{split}
    \xymatrix{\aa^{n+1}\times\aa^k\ar[rr]^{\phi}\ar[dd]_{\pi\times\text{id}_{\aa^k}}\ar[rd]^{F}&&
      \aa^{n+1}\times\aa^{k'}\ar[dd]^{\pi\times\text{id}_{\aa^{k'}}}\ar[ld]_{F'}\\
      &\cc\\
      \aa^1\times\aa^k\ar[rr]_{\psi}&&\aa^1\times\aa^{k'}}
  \end{split}
\end{equation}
Standard techniques (e.g.~\cite{IDGdd-06}) show that, at least at the level of germs, the
tangent space to the corresponding deformation functor, denoted by $T^1_{f/\pi}$, is given
by the algebra
\begin{equation}
  \label{eq:2}
  T^1_{f/\pi}=\frac{\cc[x_0,\dots,x_n]}{(\pi)+\Theta_\pi(f)}
\end{equation}
where $\Theta_\pi$ denotes the vector fields on $\aa^{n+1}$ tangent to all the fibres of
$\pi$. We will refer to this algebra as the Jacobian algebra of $f$ at the $0$-fibre of
$\pi$.  In the case of the mirror of $\pp^n$, it is readily seen that $\Theta_\pi$ is
freely generated by the vector fields
\begin{equation}
  \label{eq:3}
  x_i\partial_{x_i}-x_{i+1}\partial_{x_{i+1}}, i=0,\dots,n-1.
\end{equation}
Therefore we have
\begin{equation}
  \label{eq:4}
  T^1_{f/\pi}=\frac{\cc[x]}{(x^{n+1})}\simeq H^*(\pp^n;\cc)
\end{equation}
It is in this sense that seems natural to us to call the pair $(f,\pi)$ the mirror
fibration of $\pp^n$.

Let $p_{0}, \ldots ,p_{n}$ be integer greater or equal than one. In the case of the
weighted projective space $\pp(p_0,\dots,p_n)$, it has also been recently proved that the
restriction of the function $f=x_0+\dots+x_n$ to the torus $x_0^{p_0}\dots x_n^{p_n}=1$ is
the mirror partner of $\pp(p_0,\dots,p_n)$. This result appears as a conjecture in
\cite{mann-2006} and it follows after the calculation of the small quantum orbifold
cohomology of $\pp(p_0,\dots,p_n)$ by T.~Coates {\em et al.}  (\cite{CCLTqocwps}).

In this note, we construct a mirror fibration, in the sense explained above, for a toric
orbifold whose coarse moduli space is a well--formed weighted projective space.  In order
to state our main result, we first introduce some notations.

A sequence of weights $\mbf{p}:=(p_0,\dots,p_n) \in (\nn_{>0})^{n+1}$ is called
\textit{well--formed} if for any $i\in\{0, \ldots ,n\}$ we have $\gcd(p_{0}, \ldots
,\widehat{p_{i}}, \ldots ,p_{n})=1$.  A weighted projective space $\pp(\mbf{p})$ is called
well--formed if its weights are well--formed.

As explained in Section 5 of \cite{FMNts-07}, a toric orbifold whose coarse moduli space
is a well--formed weighted projective space $\pp(\mbf{p})$ can be encoded by a
$(n+1)$--tuple $\mbf{w}:=(w_{0}, \ldots ,w_{n}) \in (\nn_{>0})^{n+1}$ which are the
multiplicities of the toric divisors. Such a toric orbifold is denoted by
$\mathcal{X}(\mbf{w},\mbf{p})$.

Now we state our main theorem.

\begin{thm}\label{thm:intro}
  Let $\mbf{p}:=(p_0,\dots,p_n) \in (\nn_{>0})^{n+1}$ be a sequence of well--formed
  weights. Let $\mbf{w}:=(w_0,\dots,w_n) \in (\nn_{>0})^{n+1}$.  There exists a fibration
  $\pi_{\mbf{p}}:\mathcal{Y}(\mbf{p})\ra\mathcal{C}(\mbf{p})$ over a rational curve
  together with a function $f_{\mbf{w}}:\mathcal{Y}(\mbf{p}) \ra\cc$ such that
  \begin{enumerate}
  \item the generic fibre $\pi_{\mbf{p}}^{-1}(t), t\not=0$ is isomorphic to the torus
    $x_0^{p_0}\dots x_n^{p_n}=1$ and $f_{\mbf{w}}$ is given by
    $x_0^{w_{0}}+\dots+x_n^{w_{n}}$ under this isomorphism;
  \item we have a ring isomorphism
    \begin{equation}
      \label{eq:5}
      T^1_{f_{\mbf{w}}/\pi_{\mbf{p}}}\simeq A^*_{\orb}(\mathcal{X}(\mbf{w},\mbf{p});\cc)
    \end{equation}
    where the right-hand side denotes the orbifold Chow ring of
    $\mathcal{X}(\mbf{w},\mbf{p})$.
  \end{enumerate}
\end{thm}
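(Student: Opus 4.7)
The plan is to realise $\mathcal{Y}(\mathbf{p})$ locally, near the singular fibre of interest, as the affine space $\aa^{n+1}$, with
$$\pi_{\mathbf{p}}(x_0,\ldots,x_n) = x_0^{p_0}\cdots x_n^{p_n}, \qquad f_{\mathbf{w}}(x_0,\ldots,x_n) = x_0^{w_0}+\cdots+x_n^{w_n},$$
and $\mathcal{C}(\mathbf{p}) \supset \aa^1$ a rational--curve compactification (whose precise form is irrelevant for the local Jacobian algebra at $0$). Statement (i) is then a direct verification: for $t\neq 0$ the fibre $\pi_{\mathbf{p}}^{-1}(t)$ lies inside $(\cc^{*})^{n+1}$ and, via a weighted rescaling, is isomorphic to the torus $\{x_0^{p_0}\cdots x_n^{p_n}=1\}$, under which $f_{\mathbf{w}}$ restricts (up to rescaling the coordinate on $\mathcal{C}(\mathbf{p})$) to $\sum x_i^{w_i}$.

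For statement (ii), the central calculation is to pin down $\Theta_{\pi_{\mathbf{p}}}$. I would prove that it is a free $\cc[x]$--module generated by the logarithmic vector fields
$$\xi_i := p_{i+1}\,x_i \partial_{x_i} - p_i\,x_{i+1} \partial_{x_{i+1}}, \qquad i=0,\ldots,n-1,$$
which directly extend the presentation (\ref{eq:3}). The argument: a derivation $\sum_j v_j \partial_{x_j}$ kills $\pi_{\mathbf{p}}$ iff $\sum_j p_j v_j \prod_{k\neq j} x_k = 0$ in $\cc[x]$; specialising at $x_j = 0$ and exploiting that $\prod_{k\neq j} x_k$ is a non-zero-divisor in $\cc[x]/(x_j)$ forces $x_j \mid v_j$, and writing $v_j = x_j u_j$ reduces the condition to the single linear relation $\sum_j p_j u_j = 0$, whose solution module has the $\xi_i$ as a free basis. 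Applying $\xi_i$ to $f_{\mathbf{w}}$ yields $\xi_i(f_{\mathbf{w}}) = p_{i+1} w_i x_i^{w_i} - p_i w_{i+1} x_{i+1}^{w_{i+1}}$, so by (\ref{eq:2})
$$T^1_{f_{\mathbf{w}}/\pi_{\mathbf{p}}} = \frac{\cc[x_0,\ldots,x_n]}{\left(x_0^{p_0}\cdots x_n^{p_n},\ p_{i+1} w_i x_i^{w_i} - p_i w_{i+1} x_{i+1}^{w_{i+1}} : 0\le i \le n-1\right)}.$$

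The final step is to identify this quotient with $A^*_{\orb}(\mathcal{X}(\mathbf{w},\mathbf{p});\cc)$. Well--formedness of $\mathbf{p}$ together with the stacky fan description of $\mathcal{X}(\mathbf{w},\mathbf{p})$ in Section~5 of \cite{FMNts-07} feeds into the Borisov--Chen--Smith presentation and should yield $A^*_{\orb}$ as the polynomial ring in generators $y_0,\ldots,y_n$ (one per toric ray) modulo the Stanley--Reisner relation $y_0^{p_0}\cdots y_n^{p_n}=0$ and the linear equivalences $p_{i+1}w_i y_i^{w_i} = p_i w_{i+1} y_{i+1}^{w_{i+1}}$ coming from the lattice relations among the rays; sending $x_i \mapsto y_i$ would then give (\ref{eq:5}). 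The hardest part is precisely this last identification: while the relations are what one expects, extracting them cleanly from the Chen--Ruan theory on $\mathcal{X}(\mathbf{w},\mathbf{p})$, with its age-graded twisted sectors and obstruction bundle contributions, requires care. A practical route is to first check that both sides agree as graded vector spaces, under the grading $\deg x_i = 1/w_i$ on the Jacobian side and age + codimension on the Chow side, and then verify multiplicativity on the generators, where the Chen--Ruan product reduces to the ordinary toric intersection product determined by the stacky fan.
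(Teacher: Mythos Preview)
Your computation of $\Theta_{\pi_{\mathbf{p}}}$ and of the resulting Jacobian algebra is correct, but the final identification with $A^*_{\orb}(\mathcal{X}(\mathbf{w},\mathbf{p}))$ fails, and this is not a matter of bookkeeping: the two rings are genuinely non-isomorphic in general. Take $\mathbf{p}=(1,1,2)$ and $\mathbf{w}=(1,1,1)$. Your recipe gives
\[
\cc[x_0,x_1,x_2]\big/\bigl(x_0x_1x_2^{\,2},\ x_0-x_1,\ 2x_1-x_2\bigr)\ \simeq\ \cc[x]/(x^4),
\]
a truncated polynomial ring with graded pieces of dimension $1,1,1,1$. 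On the other hand, from the Borisov--Chen--Smith presentation one finds
\[
A^*_{\orb}(\pp(1,1,2);\cc)\ \simeq\ \cc[h,\tau]\big/\bigl(h\tau,\ \tau^2-h^2\bigr),
\]
where $h$ is the untwisted hyperplane class and $\tau$ is the class of the $\mu_2$-twisted sector over $[0{:}0{:}1]$, of age $1$. This ring has graded pieces of dimension $1,2,1$ and its maximal ideal cannot be generated by a single element; in particular every element of the maximal ideal cubes to zero, whereas $x^3\neq 0$ in $\cc[x]/(x^4)$. So already your proposed ``graded vector space'' check detects the discrepancy.

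The conceptual reason is that the twisted-sector classes are \emph{not} polynomials in the $x_i$: under the identification $\qq[N]^\Sigma\simeq \qq[S]/(\cdots)$ of the paper, they correspond to the fractional monomials coming from the sets $S_i$ in Lemma~\ref{prop:2}(a), e.g.\ $z_0^{1/2}z_1^{1/2}$ in the example above. The paper therefore does \emph{not} take $\mathcal{Y}(\mathbf{p})=\aa^{n+1}$; it builds $\mathcal{Y}(\mathbf{p})=\spec\qq[T\oplus S]/I$ from the enlarged semigroup $S\subset(\qq_{\ge 0})^{n+1}$, so that the coordinate ring of the zero fibre already contains these fractional generators. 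The torus you are implicitly using appears in diagram~\eqref{eq:23} only as the open locus $\phi(\mathbb{T}^{n+1})\subset\mathcal{Y}(\mathbf{p})$ lying over $\mathcal{C}(\mathbf{p})\setminus\{0\}$; your fibration is, in effect, its naive closure in $\aa^{n+1}$, and the special fibre of that closure is too small to see the inertia. The examples in Section~\ref{sec:chow-rings-as} make this explicit: for $\mathbf{p}=(2,3,5)$ the space $\mathcal{Y}(\mathbf{p})$ sits in $\aa^{12}$, not $\aa^{3}$.
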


The definition of the orbifold cohomology can be found in \cite{CRnco} or in
\cite{AGVaoqc}.  To prove the theorem above, we only use the fact that the
$\gcd{\mbf{p}}=1$ and not that the $p_{i}$'s are well--formed.  Nevertheless, it is not a
more general case as it is explain in Proposition \ref{prop:iso,mirror,fibration}. We put
this assumption in the theorem for the following reason.  As stated, this theorem
highlights the role of the fibration and the function with respect to the toric orbifold
$\mathcal{X}(\mbf{w},\mbf{p})$.  Indeed, the coarse moduli space is encoded by the
fibration $\pi_{\mbf{p}}$ whereas the root of toric divisors are encoded by the function
$f_{\mbf{w}}$.  It would be interesting to see how to encode the essentially gerbe
structure appeared in Section 6 of \cite{FMNts-07} for the mirror fibration.

The reader wanting to know what $\pi_{\mbf{p}}$ and $f_{\mbf{w}}$ look like, might wish to
have a look at the last section of this note before reading any further.

\begin{convention}
  An \textit{orbifold} is a smooth DM stack of finite type over $\cc$ with trivial generic
  stabiliser.
\end{convention}

\section{Orbifold Chow ring of smooth toric DM stacks}
\label{sec:toric-stacks-polyt}

First we recall some general facts on smooth toric DM stacks.  According to
\cite{BCSocdms05} a stacky fan denoted by $\bs{\Sigma}$ is a triple $(N,\Sigma,\beta)$
where $N$ is a finitely generated abelian group $N$, $\Sigma$ is a simplicial fan in
$N_\qq:=N\otimes_\zz\qq$ with $n+1$ rays and $\beta:\zz^{n+1}\ra N$ is a group
homomorphism such that the image of the standard basis, denoted by $(\mbf{e}_{0}, \ldots
,\mbf{e}_{n})$ of $\zz^{n+1}$ generates the rays of $\Sigma$.  To this combinatorial data,
one can associate a smooth DM stack denoted by $\mathcal{X}(\bs{\Sigma})$. We will not use
explicitly this construction so we refer to \cite{BCSocdms05} for it.

Denote by $\qq[N]^\Sigma$ denotes the deformed group ring, that is, the underlying vector
space is simply $\qq[N]$ but the multiplication has been deformed according to the rule
\begin{equation}
  \label{eq:10} {y}^{c_1}\cdot
  {y}^{c_2} =
  \begin{cases} {y}^{c_{1}+c_{1}} & \text{if
      there exists a cone $\sigma\in\Sigma$ such that
      $c_{1},c_{2}\in\sigma$}\\ 0 & \text{otherwise.}
  \end{cases}
\end{equation}
Let $\theta\in N^{\vee}:=\Hom_{\zz}(N,\zz)$. We define the $\qq[N]^{\Sigma}$--linear
morphism
\begin{align}
  \xi_{\theta}: \qq[N]^{\Sigma} &\lra  \qq[N]^{\Sigma} \\
  y^{c} &\longmapsto \theta(c)y^{c} \nonumber
\end{align}
One can prove easily the following lemma.
\begin{lem}\label{lem:derivation}
  For any $\theta\in N^{\vee}$, the linear morphism $\xi_{\theta}$ is a derivation of
  $\qq[N]^{\Sigma}$.
\end{lem}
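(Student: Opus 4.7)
The plan is to verify the Leibniz rule directly on the monomial basis $\{y^c : c \in N\}$ and then extend by linearity, since $\xi_\theta$ is already defined to be linear. So it suffices to check that
\[
\xi_\theta(y^{c_1} \cdot y^{c_2}) = \xi_\theta(y^{c_1}) \cdot y^{c_2} + y^{c_1} \cdot \xi_\theta(y^{c_2})
\]
for all $c_1, c_2 \in N$, and the proof naturally splits along the two cases defining the deformed multiplication in \eqref{eq:10}.

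First I would handle the case where there exists a cone $\sigma \in \Sigma$ containing both $c_1$ and $c_2$. Then $y^{c_1} \cdot y^{c_2} = y^{c_1 + c_2}$, so the left-hand side equals $\theta(c_1 + c_2)\, y^{c_1+c_2}$, while the right-hand side equals $\theta(c_1)\, y^{c_1}\cdot y^{c_2} + \theta(c_2)\, y^{c_1}\cdot y^{c_2} = \bigl(\theta(c_1) + \theta(c_2)\bigr) y^{c_1+c_2}$. The $\zz$-linearity of $\theta \in N^\vee = \Hom_\zz(N,\zz)$ makes these two expressions equal.

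Next I would handle the complementary case where no cone of $\Sigma$ contains both $c_1$ and $c_2$. Then $y^{c_1}\cdot y^{c_2} = 0$ in $\qq[N]^\Sigma$, so the left-hand side is $\xi_\theta(0) = 0$. On the right-hand side each of the two products $y^{c_1}\cdot y^{c_2}$ is again zero by the same rule, hence the whole expression vanishes.

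There is no real obstacle: once the Leibniz rule is checked on monomials, linearity of $\xi_\theta$ and distributivity of the deformed product extend it to all of $\qq[N]^\Sigma$. The only substantive input is the linearity of $\theta$ on $N$; the compatibility with the deformation is automatic because the vanishing rule is symmetric in $c_1$ and $c_2$ and preserved by scaling.
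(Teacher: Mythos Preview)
Your proposal is correct and is exactly the straightforward verification one expects; the paper itself omits the proof entirely, merely stating that ``One can prove easily the following lemma,'' so there is no alternative argument to compare against.
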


We finish our recall by stating the main result of \cite{BCSocdms05}.  The ring
$A_{\orb}^{\star}(\mathcal{X}(\bs{\Sigma}))$ is isomorphic to
\begin{equation}
  \label{eq:9}
  \frac{\qq[N]^{\Sigma}}
  {\langle\xi_{\theta}(\sum_{i=0}^{n} y^{\beta(\mbf{e}_{i})}):\theta\in
    N^{\vee}\rangle}.
\end{equation}

\begin{rem}
  From Lemma \ref{lem:derivation} and the description of the orbifold Chow ring above, it
  seems natural to see $\qq[N]^{\Sigma}$ as the fibration and $\sum_{i=0}^{n}
  y^{\beta(\mbf{e}_{i})}$ as a function. We will explicit this in the next section on our
  examples.
\end{rem}

Let $\mbf{p}:=(p_0,\dots,p_n) \in (\nn_{>0})^{n+1}$ be a sequence of well--formed weights.
Let $\mbf{w}:=(w_0,\dots,w_n) \in (\nn_{>0})^{n+1}$.  Now, we describe the stacky fan of
the toric orbifold $\mathcal{X}(\mbf{w},\mbf{p})$.

The finitely abelian group $N$ is $\zz^{n+1}/\gen{\sum_{i=0}^{n}p_{i}\mbf{e}_{i}}$. As the
$p_{i}$ are coprime, the abelian group $N$ is free of rank $n$.  The vector space
$N\otimes_{\zz}\qq$ comes equipped with a natural simplicial fan $\Sigma$ given by the
projections of the non-negative coordinate subspaces in $\zz^{n+1}\otimes_\zz\qq$. More
precisely, for $k\in\{0, \ldots ,n\}$, the set of $k$-dimensional cones of $\Sigma$ is
given by
\begin{equation}
  \label{eq:6} \sigma_J:=\set{\sum_{j\in
      J}\lambda_j[\mathbf{e}_j]:\lambda_j\geq 0 \in \qq}
\end{equation} where $J\subset\set{0,\dots,n}$ runs through all the
subsets with $k$ elements. In order to define the homomorphism
$\beta$, we choose a point $w_i\mathbf{e}_i\in\zz^{n+1},~w_i>0$. If
$W$ denotes the diagonal matrix $(w_0,\dots,w_n)$, we define $\beta$
as the composite
\begin{equation}
  \label{eq:7} \beta:\zz^{n+1}\xra{W}\zz^{n+1}\ra
  N.
\end{equation} 
We denote by $\mathcal{X}(\mathbf{w},\mathbf{p})$ the smooth DM stack associated to the
stacky fan $(N,\Sigma,\beta)$.

To have a more geometrical grasp on $\mathcal{X}(\mathbf{w},\mathbf{p})$, we use the
bottom--up construction and Section 7 of \cite{FMNts-07}.  We deduce that the coarse
moduli space of $\mathcal{X}(\mathbf{w},\mathbf{p})$ is
$\mathcal{X}(\mathbf{1},\mathbf{p})$ where all the $w_{i}$'s are $1$.  It is a
straightforward computation to see that $\mathcal{X}(\mathbf{1},\mathbf{p})$ is the
well--formed weighted projective space $\pp(\mbf{p})$.  Denote by
$\mathcal{T}:=[(\cc^{\ast})^{n+1}/\cc^{\ast}]$ where the action of $\cc^{\ast}$ on
$(\cc^{\ast})^{n+1}$ is given by :
\begin{equation}
  \lambda\cdot(x_{0}, \ldots ,x_{n}):=(\lambda^{p_{0}}x_{0}, \ldots ,\lambda^{p_{n}}x_{n}).
\end{equation}
The $\mathcal{X}(\mathbf{1},\mathbf{p})\setminus \mathcal{T}$ is a simple normal crossing
divisor with irreducible components denoted by $\mathcal{D}_{i}$. Denote
$\bs{\mathcal{D}}:=(\mathcal{D}_{0}, \ldots ,\mathcal{D}_{n})$. The $\mbf{w}$--th root
stack of $(\mathcal{X}(\mathbf{1},\mathbf{p}),\bs{\mathcal{D}})$ is the fibber product
\begin{equation}
  \xymatrix{
    \ar[r] \ar@{}[rd]|{\square}
    \ar[d]\sqrt[\mbf{w}]{\bs{\mathcal{D}}/\mathcal{X}(\mathbf{1},\mathbf{p})}&
    [\aa^{n+1}/(\cc^{\ast})^{n+1}]\ar[d]^{\wedge \mbf{w}}\\
    \ar[r]\mathcal{X}(\mathbf{1},\mathbf{p})& [\aa^{n+1}/(\cc^{\ast})^{n+1}]
  }
\end{equation}
where the stack morphism $\wedge \mbf{w}:[\aa^{n+1}/(\cc^{\ast})^{n+1}] \to
[\aa^{n+1}/(\cc^{\ast})^{n+1}]$ is defined by sending $x_{i}\mapsto x_{i}^{w_{i}}$ and
$\lambda_{i}\mapsto \lambda_{i}^{w_{i}}$ where $x_{i}$ (resp. $\lambda_{i}$) is the
coordinates of $\aa^{n+1}$ (resp. $(\cc^{\ast})^{n+1}$ ).  Section 7 of \cite{FMNts-07},
we deduce that $\mathcal{X}(\mathbf{w},\mathbf{p})$ is isomorphic to
$\sqrt[\mbf{w}]{\bs{\mathcal{D}}/\mathcal{X}(\mathbf{1},\mathbf{p})}$.

 \begin{rem}\label{rem:iso,w,p}
   Let $a\in \nn$ such that $\gcd(a,p_{n})=1$. Then we have that
   \begin{equation}
     \mathcal{X}((w_0, \ldots ,w_n),(ap_{0}, \ldots ,ap_{n-1},p_n)) \simeq 
     \mathcal{X}((w_0, \ldots ,aw_n),(p_{0}, \ldots ,p_{n}))
   \end{equation}
 \end{rem}

 \section{Orbifold Chow ring as Jacobian algebra}
 \label{sec:orbifold-chow-ring}

 In this section we construct the fibration with the properties described in the
 introduction.

\begin{overview}
  Looking at the orbifold Chow ring in \eqref{eq:9}, we will see $\qq[N]^{\Sigma}$ as a
  ring defining the fibration $\pi_{\mbf{p}}:\mathcal{Y}(\mbf{p})\to \mathcal{C}(\mbf{p})$
  and $\sum_{i=0}^{n} y^{\beta(\mbf{e}_{i})}$ as the function
  $f_{\mbf{w}}:\mathcal{Y}(\mbf{p}) \to \qq$. Using this idea, we will see the ring
  $A^{\ast}_{\orb}(\mathcal{X}(\mbf{w},\mbf{p}))$ as a Jacobian algebra.
\end{overview}

We first wish to express $\qq[N]^\Sigma$ as the quotient of a polynomial algebra by an
ideal. In order to do so, we define
\begin{align}
  \widetilde{\alpha} : \zz^{n+1} &\lra (\qq_{\geq 0})^{n+1} \\
  \mbf{a}:=(a_{0}, \ldots ,a_{n}) & \longmapsto \mbf{a}-\gamma(\mbf{a})\mbf{p}\nonumber
\end{align}
where $\gamma(\mathbf{a}):=\min\set{\frac{a_i}{p_i}:i=0,\dots,n}$.

The map $\widetilde{\alpha}$ admits the following interpretation:
$\widetilde{\alpha}(\mathbf{a})$ is the point of intersection of the line
$\mathbf{a}+\lambda\mathbf{p}$ with the set $\set{(x_0,\dots,x_n)\in(\qq_{\geq 0})^{n+1}
  \mbox{ such that } x_0\cdots x_n=0 }$. It thus descends to a map $\alpha:N\to (\qq_{\geq
  0})^{n+1}$. Denote also by $\mbf{a}$ the class of $\mbf{a}$ in $N$. We also see from
this interpretation that
\begin{equation}
  \label{eq:12}
  \alpha(\mathbf{a}+\mathbf{a}')=\alpha(\alpha(\mathbf{a})+\alpha(\mathbf{a}'))
\end{equation} 
and if $\sigma_J\subset N_{\qq}$ denotes the cone of $\Sigma$ defined in (\ref{eq:6}) then
\begin{equation}
  \label{eq:13} \alpha(\sigma_J) \subset
  \set{(x_0,\dots,x_n)\in(\qq_{\geq 0})^{n+1} \mbox{such that } x_{i}=0 \mbox{ for } i\not\in J}.
\end{equation} 
Notice that $\mbf{a},\mbf{a'}\in N$ are not in the same cone if and only if for any
$i\in\set{0, \ldots ,n}$, $a_{i}+a'_{i}>0$. It follows that
$\alpha(\mathbf{a}+\mathbf{a}')=\alpha(\mathbf{a})+\alpha(\mathbf{a}')$ if and only if
there exists $\sigma\in\Sigma$ with $\mathbf{a},\mathbf{a}'\in \sigma$. Denote by
$S\subset (\qq_{\geq 0})^{n+1}$ the semigroup (with unity) generated by $\alpha(N)$.
Denote by $\qq[S]$ the algebra generated by $S$.  As usual, we denote by $z_{i}$ the
element $\alpha(\mbf{e}_{i})$ in $\qq[S]$ and we write $\mbf{z^{b}}:=z_0^{b_0}\dots
z_n^{b_n}$ for the element $\mbf{b}\in \qq[S]$.  The definition of $\qq[N]^{\Sigma}$ and
the above discussion imply that
\begin{equation}
  \label{eq:14}
  \qq[N]^\Sigma\simeq\frac{\qq[S]}{(\set{\mathbf{z}^{\mathbf{b}}:
      \mathbf{b}=(b_0,\dots,b_n)\in
      S,b_i>0,i=0,\dots,n})}
\end{equation} 
Denote by $T\subset \qq$ the semigroup generated by $\gamma(S)$. We have the following
descriptions of $S$ and $T$:

\begin{lem}
  \label{prop:2}
  \begin{enumerate}
  \item The semigroup $S$ is generated by $\set{\mathbf{e}_i:i=0,\dots,n}$ and
    $S_0,\dots,S_n$ where
    \begin{equation}\label{eq:34}
      S_i:=\set{\frac{1}{p_i}(\overline{kp_{0}}^{p_{i}},\dots,\overline{kp_{n}}^{p_{i}}) \mbox{ where } \overline{kp_{n}}^{p_{i}} \mbox{ is the remainder in the division of $kp_{j}$ by $p_{i}$}}
    \end{equation}
  \item $T$ is generated by $1/\lcm(p_i,p_j), 0\leq i < j \leq n$.
  \end{enumerate}
\end{lem}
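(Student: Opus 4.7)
The plan is to derive both statements from an explicit coordinate formula for $\widetilde{\alpha}$. Fix $\mathbf{a}\in\zz^{n+1}$ and let $i_0$ attain $\gamma(\mathbf{a})$; then the $j$-th coordinate of $\widetilde{\alpha}(\mathbf{a})$ is $(a_jp_{i_0}-a_{i_0}p_j)/p_{i_0}$, a non-negative rational by choice of $i_0$. Performing Euclidean division of the numerator by $p_{i_0}$ I would write $a_jp_{i_0}-a_{i_0}p_j=q_jp_{i_0}+\overline{-a_{i_0}p_j}^{p_{i_0}}$ with $q_j\in\nn$, whence
\begin{equation*}
\widetilde{\alpha}(\mathbf{a})=\sum_{j=0}^n q_j\mathbf{e}_j+\tfrac{1}{p_{i_0}}\bigl(\overline{-a_{i_0}p_0}^{p_{i_0}},\ldots,\overline{-a_{i_0}p_n}^{p_{i_0}}\bigr).
\end{equation*}
The second summand is the element of $S_{i_0}$ associated with $k=-a_{i_0}$, so every element of $\alpha(N)$ sits in the semigroup generated by the $\mathbf{e}_i$ and the $S_i$. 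Conversely, $\mathbf{e}_i=\widetilde{\alpha}(\mathbf{e}_i)\in\alpha(N)$, and for any $k$ the choice $a_i=-k$, $a_j=(\overline{kp_j}^{p_i}-kp_j)/p_i$ produces an integer vector $\mathbf{a}$ whose $\widetilde{\alpha}$ is precisely the corresponding element of $S_i$; this proves (a).

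For (b), I would decompose $\mathbf{v}\in S$ using (a) as $\sum_l n_l\mathbf{e}_l+\sum_{i,k}m_{i,k}s^{(i,k)}$ with $s^{(i,k)}\in S_i$, so that coordinate-wise
\begin{equation*}
\frac{v_l}{p_l}=\frac{n_l}{p_l}+\sum_{i\neq l,\,k}m_{i,k}\frac{\overline{kp_l}^{p_i}}{p_ip_l}.
\end{equation*}
Because $\overline{kp_l}^{p_i}=kp_l-qp_i$ is divisible by $\gcd(p_i,p_l)$, each summand in the sum is a non-negative integer multiple of $\gcd(p_i,p_l)/(p_ip_l)=1/\lcm(p_i,p_l)$; similarly $n_l/p_l=(n_lp_i/\gcd(p_i,p_l))\cdot 1/\lcm(p_i,p_l)$ for any $i\neq l$ (such $i$ exists since $n\geq 1$). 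Therefore every $v_l/p_l$ lies in the semigroup $T':=\langle 1/\lcm(p_i,p_j):i<j\rangle$, and since $\gamma(\mathbf{v})=\min_l v_l/p_l$ is one of the $v_l/p_l$, also $\gamma(\mathbf{v})\in T'$. Hence $T\subseteq T'$.

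For the reverse inclusion, fix $i<j$. When $p_i\nmid p_j$, Bezout furnishes $k_0$ with $\overline{k_0p_j}^{p_i}=\gcd(p_i,p_j)$, and for $N$ large enough $\mathbf{v}:=\mathbf{e}_i+s^{(i,k_0)}+N\sum_{l\neq i,j}\mathbf{e}_l$ satisfies $\gamma(\mathbf{v})=v_j/p_j=\gcd(p_i,p_j)/(p_ip_j)=1/\lcm(p_i,p_j)$; in the exceptional case $p_i\mid p_j$ the vector $\mathbf{e}_j+N\sum_{l\neq j}\mathbf{e}_l$ already has $\gamma=1/p_j=1/\lcm(p_i,p_j)$. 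Either way $1/\lcm(p_i,p_j)\in T$, giving $T'\subseteq T$. The only delicate step is the Euclidean-division identity in (a); the rest of (b) is arithmetic bookkeeping anchored on the key fact $\gcd(p_i,p_l)\mid\overline{kp_l}^{p_i}$.
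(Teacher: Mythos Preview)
Your argument is correct and follows essentially the same route as the paper's proof: in (a) you decompose $\widetilde{\alpha}(\mathbf{a})$ via Euclidean division at the minimising index into an integer part plus an element of $S_{i_0}$, and in (b) you exploit the divisibility $\gcd(p_i,p_l)\mid\overline{kp_l}^{p_i}$ together with an explicit element of $S$ achieving $\gamma=1/\lcm(p_i,p_j)$. The only cosmetic differences are that the paper shifts $k$ by a multiple of $p_{i_0}$ to make it non-negative (which changes nothing modulo $p_{i_0}$), and that you treat the case $p_i\mid p_j$ separately in the reverse inclusion of (b), a boundary case the paper glosses over.
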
 \newcounter{tmp}

\begin{proof} (a). Let $S'$ be the semigroup generated by $\set{\mathbf{e}_i:i=0,\dots,n}$
  and $S_0,\dots,S_n$. We want to show that $S=S'$. Notice that
  $\widetilde{\alpha}(\zz^{n+1})=\alpha(N)$ and by definition generates $S$. Let
  $\mathbf{a}=(a_0,\dots,a_n)\in\zz^{n+1}$. We can assume without loss of generality that
  $a_0/p_0=\min\set{a_i/p_i: i=0,\dots,n}$. Let $u\in\nn$ such that $k:=up_{0}-a_{0} \in
  \nn$. Writing the Euclidean division $kp_i=q_ip_0+\overline{kp_{i}}^{p_{0}}$, we see
  that $a_i-(up_{i}+q_i)\geq 0$. Then
  \begin{equation}
    \label{eq:16}
    \widetilde{\alpha}(\mathbf{a})=\mathbf{a}-\frac{a_0}{p_0}\mathbf{p}
    =(0,a_1-(up_{1}+q_1),\dots,a_n-(up_{n}+q_n))+\frac{1}{p_0}(0,\overline{kp_{1}}^{p_{0}},\dots,\overline{kp_{n}}^{p_{0}})
  \end{equation} from which it follows that $S\subset S'$. For the
  reverse inclusion, it is enough to show that $S_i\subset S$. Again we
  show that $S_0\subset S$. Let
  $\frac{1}{p_{0}}(0,\overline{kp_{1}}^{p_{0}},\dots,\overline{kp_{n}}^{p_{0}})$
  be an element in $S_{0}$.
  There exists $q_{i}\in \nn$ such that for $i\in\{0, \ldots ,n\}$ we have
  \begin{equation}
    -kp_{i}=-q_{i}p_{0}+\overline{-kp_{i}}^{p_{0}}=(1-q_{i})p_{0}- \overline{kp_{i}}^{p_{0}}. 
  \end{equation}
  Dividing by $p_{0}p_{i}$, we see that ${-k}/p_{0}\leq (1-q_{i})/p_{i}$.  We deduce that
  \begin{equation}
    \alpha(-k,1-q_{1},\dots,1-q_{n})=\frac{1}{p_0}(0,\overline{kp_{1}}^{p_{0}},\dots,\overline{kp_{n}}^{p_{0}}).
  \end{equation}
  \newline


  \vspace{\parsep} (b). As before we consider the semigroup $T'$ generated by
  $1/\lcm(p_i,p_j)$ for $0\leq i<j\leq n$. In view of part (a), an element $\mathbf{s}\in
  S$ can be written as a finite sum
  \begin{equation}
    \label{eq:17} \mathbf{s}=\sum_{i=0}^n s_i\mathbf{e}_i +
    \sum_{i=0}^n \mathbf{b}_i,~ s_{i}\geq 0
  \end{equation} with $\mathbf{b}_i$ in the semigroup generated by
  $S_i$. If we write
  $\mathbf{b}_i=\frac{1}{p_i}(b_{i,0},\dots,b_{i,n})$,
  $\gamma(\mathbf{s})$ is given by
  \begin{equation}
    \label{eq:18}
    \gamma(\mathbf{s})=\min\set{\frac{1}{p_j}\left(s_j+\sum_{i=0}^n
        \frac{b_{i,j}}{p_i}\right):j=0,\dots,n}
  \end{equation} On the other hand, for any $0\leq i\leq n$ there exists $k_{i}\in \nn$ such that for any $j\in\set{0, \ldots ,n}$
  we have $b_{i,j}\equiv k_{i}p_j\!\!\mod p_i$. In particular,
  $b_{i,i}=0$ and $b_{i,j}\in\gcd(p_i,p_j)\zz$ if $i\neq j$.  As
  $\lcm(p_i,p_j)\gcd(p_i,p_j)=p_ip_j$, it follows that
  $\gamma(\mathbf{s})\in T'$. To see that $1/\lcm(p_i,p_j)\in T$ for
  $i\not=j$, choose a positive integer $\ell$ such that
  $\ell p_j\equiv \gcd(p_i,p_j)\!\!\mod p_i$. For $m\in\set{0, \ldots ,n}$, set $\ell_m\equiv kp_m\!\!\mod p_i$ with
  $0\leq \ell_m<p_m$. Notice that $\ell_{j}=\gcd(p_{i}p_{j})$. The element
  \begin{equation}
    \label{eq:19} \mathbf{b}=\frac{1}{p_i}(\ell_0,\dots,\ell_n) +
    (k_0,\dots,\stackrel{j\char41}{0},\dots,k_n)
  \end{equation} for $k_m$ sufficiently large, satisfies
  $\gamma(\mathbf{b})=1/\lcm(p_i,p_j)$.
\end{proof}

We are now ready to construct the fibration described in the introduction. Let
$\ell=1/\lcm(p_0,\dots,p_n)$ and $\overline{T}$ be the additive subgroup of $\qq$
generated by $\ell$. Let us also set $\ell_i=p_i\ell$ and denote by $\overline{S}$ the
subgroup of $\qq^{n+1}$ generated by $\ell_i\mathbf{e}_i$. We have a well-defined
commutative diagram respecting the addition:
\begin{equation}
  \label{eq:20}
  \begin{array}{cc}
    \parbox{3cm}{$\xymatrix{T\oplus S\ar[r]^{\phi^*} & \overline{S} \\
        T\ar[u]^{\pi_{\mbf{p}}^*}\ar[r]_{\psi^*} & \overline{T}\ar[u]_{\rho^*}}$} &
    \parbox{3cm}{$\phi^*(\gamma,\mathbf{b})=\mathbf{b}-\gamma\mathbf{p}$\\
      $\pi_{\mbf{p}}^*(\gamma)=(\gamma,0)$\\ $\rho^*(\lambda)=\lambda\mathbf{p}$\\
      $\psi^*(\gamma)=-\gamma$}
  \end{array}
\end{equation}
Notice that $\phi^*(\gamma,\mathbf{b})=0$ if and only if
$(\gamma,\mathbf{b})=(\gamma(\mathbf{b}),\mathbf{b}-\alpha(\mathbf{b}))$.  We denote by
$t^\gamma\mathbf{z}^{\mathbf{b}}=t^\gamma z_0^{b_0}\cdots z_n^{b_n}$ the corresponding
element in $\qq[T\oplus S]$. Consider the ideal $I\subset\qq[T\oplus S]$ generated by
\begin{equation}
  \label{eq:21}
  \set{\mathbf{z}^{\mathbf{b}}-t^{\gamma(\mathbf{b})}\mathbf{z}^{\alpha(\mathbf{b})}:
    \mathbf{b}\in S}.
\end{equation} We obtain a commutative diagram of ring homomorphisms:
\begin{equation}
  \label{eq:22}
  \begin{split} \xymatrix{\qq[T\oplus S]/I\ar[r]^{~~~~\phi^*} &
      \qq[\overline{S}] \\ \qq[T]\ar[u]^{\pi_{\mbf{p}}^*}\ar[r]_{\psi^*} &
      \qq[\overline{T}]\ar[u]_{\rho^*}}
  \end{split}
\end{equation}
Consider now the elements $f_{\mbf{w}}:=\sum_{i=0}^n
\mathbf{z}^{\beta(\mathbf{e}_i)}=z_0^{w_0}+\dots+z_n^{w_n}\in\qq[T\oplus S]$. Denote by
$x_{i}$ the element $\mbf{e}_{i}$ in $\qq[\overline{S}]$. We write
$\mathbf{x}^{\mathbf{b}}=x_0^{b_0}\cdots x_n^{b_n}$ for the element $\mbf{b}\in
\qq[\overline{S}]$. Put $\overline{f}_{\mbf{w}}=\sum_{i=0}^n
\mathbf{x}^{\beta(\mathbf{e}_i)}=\sum_{i=0}^{n}x_{i}^{w_{i}} \in \qq[\overline{S}]$.
Then, taking $\spec$ of the diagram (\ref{eq:22}) we obtain the following.
\begin{thm}
  \label{prop:3} The commutative diagram
  \begin{equation}
    \label{eq:23}
    \begin{split}
      \xymatrix{\spec\qq[\overline{S}]=:\mathbb{T}^{n+1}\ar[dd]_{\rho}\ar[rr]^{\phi}\ar[rd]^{\overline{f}_{\mbf{w}}}
        && \mathcal{Y}(\mbf{p}):=\spec\qq[T\oplus S]/I\ar[dd]^{\pi_{\mbf{p}}}\ar[ld]_{f_{\mbf{w}}} \\ & \qq \\
        \spec\qq[\overline{T}]=:\mathbb{T}\ar[rr]_{\psi} &&
        \mathcal{C}(\mbf{p}):=\spec(\qq[T])}
    \end{split}
  \end{equation} satisfies:
  \begin{enumerate}
  \item $\phi$ and $\psi$ are isomorphism over their images;
  \item $\pi_{\mbf{p}}$ is flat and
  \item the Jacobian algebra of $f$ over the $0$-fibre of $\pi_{\mbf{p}}$ is isomorphic to
    the orbifold Chow ring $A_{\orb}^*(\mathcal{X}(\mathbf{w},\mathbf{p}))$.
  \end{enumerate}
\end{thm}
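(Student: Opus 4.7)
Proof plan:

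The engine of the proof is a normal form for $\qq[T \oplus S]/I$: every element admits a unique expression as a $\qq$-linear combination of monomials $t^{\gamma}\mathbf{z}^{\mathbf{a}}$ with $\gamma \in T$ and $\mathbf{a} \in \alpha(N)$ (equivalently, $\mathbf{a} \in S$ with $\gamma(\mathbf{a}) = 0$). Existence follows by iteratively applying the defining relation $\mathbf{z}^{\mathbf{b}} = t^{\gamma(\mathbf{b})}\mathbf{z}^{\alpha(\mathbf{b})}$, and uniqueness is proved by constructing an explicit $\qq$-linear retraction from $\qq[T \oplus S]$ onto the span of normal forms that annihilates $I$. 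This gives Part (b) at once: $\qq[T \oplus S]/I$ is free, hence flat, over $\qq[T]$ with basis $\{\mathbf{z}^{\mathbf{a}} : \mathbf{a} \in \alpha(N)\}$. It also gives Part (a): specialising the defining relation at $\mathbf{b} = \mathbf{p}$ yields $\mathbf{z}^{\mathbf{p}} = t$, and the construction~(\ref{eq:19}) exhibits each generator $t^{1/\lcm(p_i,p_j)}$ of $\qq[T]$ as a ratio of monomials in the $z_i$'s, so after localising at $z_0 \cdots z_n$ the map $\phi^*$ becomes an isomorphism onto $\qq[\overline{S}]$; the statement for $\psi$ is immediate since $\overline{T}$ is by construction the Grothendieck group of $-T$.

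For Part (c), let $\mx \subset \qq[T]$ be the irrelevant ideal. The normal form identifies the $0$-fibre $\qq[T \oplus S]/(I,\mx)$ additively with $\bigoplus_{\mathbf{a}\in\alpha(N)} \qq\cdot\mathbf{z}^{\mathbf{a}}$. The product $\mathbf{z}^{\mathbf{a}}\cdot\mathbf{z}^{\mathbf{a}'}$ equals $t^{\gamma(\mathbf{a}+\mathbf{a}')}\mathbf{z}^{\alpha(\mathbf{a}+\mathbf{a}')}$ in $\qq[T \oplus S]/I$, which reduces modulo $\mx$ to $\mathbf{z}^{\mathbf{a}+\mathbf{a}'}$ when $\mathbf{a},\mathbf{a}'$ share a cone of $\Sigma$ and to $0$ otherwise. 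Under the bijection $N \leftrightarrow \alpha(N)$ induced by $\alpha$, this reproduces the deformed product~(\ref{eq:10}), giving a ring isomorphism $\qq[T \oplus S]/(I,\mx) \simeq \qq[N]^{\Sigma}$ sending $f_{\mbf{w}}$ to $\sum_i y^{\beta(\mathbf{e}_i)}$.

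The remaining task, which is the main obstacle, is to match $\Theta_{\pi_{\mbf{p}}}(f_{\mbf{w}})\bmod\mx$ with the ideal $\langle \xi_\theta(\sum_i y^{\beta(\mathbf{e}_i)}) : \theta \in N^{\vee}\rangle$; the BCS description~(\ref{eq:9}) then closes the proof. One inclusion is immediate: for $\theta \in N^{\vee}$, regarded as an element of $\overline{S}^{\vee}\otimes\qq$ annihilating $\mathbf{p}$, the derivation $\xi_\theta$ of $\qq[\overline{S}]$ kills $\mathbf{x}^{-\gamma\mathbf{p}} = \phi^*(t^\gamma)$, so it restricts to a relative derivation of $\qq[T \oplus S]/I$ whose reduction mod $\mx$ is the operator of Lemma~\ref{lem:derivation}. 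For the reverse inclusion, I would exploit the natural torus action on $\mathcal{Y}(\mbf{p})$: the open immersion $\phi$ exhibits $\mathbb{T}^{n+1}$ as a dense orbit, realising $\mathcal{Y}(\mbf{p})$ as a (possibly non-normal) toric variety with $\pi_{\mbf{p}}$ equivariant with respect to $\rho$. The relative tangent sheaf is then generated by the infinitesimal action of $\ker\rho \subset \mathbb{T}^{n+1}$, whose Lie algebra is $N^{\vee}\otimes\qq$ and whose generators are exactly the $\xi_\theta$'s. Since $\Theta_{\pi_{\mbf{p}}}$ has generic rank $n = \dim\ker\rho$ and no nonzero relative vector field can vanish on the dense orbit, this forces $\Theta_{\pi_{\mbf{p}}} = \sum_{\theta \in N^{\vee}} (\qq[T \oplus S]/I) \cdot \xi_\theta$, yielding the reverse inclusion and completing Part (c).
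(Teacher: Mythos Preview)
Your proposal is correct and follows essentially the same route as the paper. You supply more detail on (a) and (b) via an explicit normal-form argument where the paper simply declares them ``clear''; for (c), both arguments identify the $0$-fibre with $\qq[N]^\Sigma$ via~\eqref{eq:14} and then show that the $\xi_\theta$ generate $\Theta_{\pi_{\mbf{p}}}$ by checking this on a dense open (you phrase it via the torus action and its dense orbit, the paper via the generic fibres of $\pi_{\mbf{p}}$) and invoking that no nonzero relative derivation can be supported on the complement.
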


\begin{proof}
  The statements (a) and (b) are clear. For (c) we notice that
  $I+(\set{t^{\gamma}:\gamma>0\in T})$ is canonically isomorphic to the right hand side of
  (\ref{eq:14}) and hence isomorphic to $\qq[N]^\Sigma$. It remains to identify the module
  $\Theta_{\pi_{\mbf{p}}}$ of $\qq[T]$-linear derivations of $\qq[T\oplus S]$ with the
  denominator of (\ref{eq:9}). According to Lemma \ref{lem:derivation}, for any $\theta\in
  N^\vee$, the application
  \begin{align}
    \label{eq:24} \xi_\theta :\qq[T\oplus S]&\lra \qq[T\oplus S] \\
    t^\gamma\mathbf{z}^{\mathbf{b}}�&\longmapsto
    \tilde{\theta}(\mathbf{b})t^\gamma\mathbf{z}^{\mathbf{b}} \nonumber
  \end{align}
  is a derivation.

  On the other hand $\xi_\theta(I)\subset I$ for $\tilde{\theta}(\qq\cdot\mathbf{p})=0$.
  Hence $\xi_\theta$ is a derivation of $\qq[T\oplus S]/I$ which is $\qq[T]$-linear by
  definition. We therefore obtain a map $\xi:N^\vee\ra\Theta_{\pi_{\mbf{p}}},
  \theta\mapsto \xi_\theta$. To see that the image of $\xi$ freely generates
  $\Theta_{\pi_{\mbf{p}}}$ over $\qq[T\oplus S]/I$ take $\theta_1,\dots,\theta_n$
  generators of $N^\vee$. Then $\xi_{\theta_1},\dots,\xi_{\theta_n}$ are independent over
  $\qq[T\oplus S]/I$ and, in view of the commutative diagram (\ref{eq:23}), we see that
  they generate the module $\Theta_{\mathcal{X}_{\psi(q)}}$ of derivations of the
  coordinate ring of the $\psi(q)$-fibre of $\pi_{\mbf{p}}$. As no derivation can be
  supported only at the $0$-fibre of $\pi_{\mbf{p}}$, we obtain the result.
\end{proof}
\begin{rem}
  \label{prop:4} C.~Sabbah points out that the ring $\qq[N]^\Sigma$ can also be described
  as the graded algebra associated to the Newton filtration induced by $\beta$ on $N_\qq$.
  More precisely, let $\mathcal{P}$ be the convex hull of $\beta(\mathbf{e}_i)$ in
  $N_\qq$. It is a convex polyhedron containing the origin whose faces are defined by
  $L_i=1$, being $L_i$ the unique $\qq$-linear on $N_\qq$ with
  $L_i(\beta(\mathbf{e}_j))=1$ for $i\not=j$. It thus defines the fan $\Sigma$. For $m\in
  N$, let us set $\nu(m):=\min\set{\lambda\geq 0:m\in\lambda\cdot\mathcal{P}}$ and define
  $\qq[N]_\nu$ as the vector space generated by $\mathbf{y}^{\mathbf{m}}$ with
  $\nu(\mathbf{m})\leq\nu$. It is readily seen that the convexity of $\mathcal{P}$ implies
  that $\nu(\mathbf{m}+\mathbf{m}')\leq\nu(\mathbf{m})+\nu(\mathbf{m}')$ with equality if
  and only there exists a cone $\sigma\in\Sigma$ containing both $\mathbf{m}$ and
  $\mathbf{m'}$. Hence we have
  \begin{equation}
    \label{eq:25} \qq[N]^\Sigma\simeq
    \text{gr}_{\mathcal{P}}\qq[N]=\bigoplus_{\nu\geq
      0}\frac{\qq[N]_\nu}{\qq[N]_{<\nu}}
  \end{equation} In fact if we set
  $\abs{\mathbf{b}}=\sum_{i=0}^n\frac{a_i}{w_i}$ for
  $\mathbf{b}=(b_0,\dots,b_n)\in\qq^{n+1}$, it is easy to see that
  \begin{equation}
    \label{eq:26}
    \nu(\mathbf{m}+\mathbf{m}')=\abs{\alpha(\mathbf{m})}+\abs{\alpha(\mathbf{m}')}
    -\abs{\gamma\big(\alpha(\mathbf{m})+\alpha(\mathbf{m}')\big)},
  \end{equation} and in particular,
  $\nu(\mathbf{m})=\abs{\alpha(\mathbf{m})}$. This formula can be used
  to identify the fibration constructed in theorem~\ref{prop:3} with a
  certain noetherian subring of $\oplus_{\nu\geq 0}\qq[N]_\nu$.
\end{rem}
\begin{rem}
  \label{prop:5} It is reasonable to expect some relation between the fibration
  (\ref{eq:23}) and the small quantum cohomology of weighted projective spaces as
  described in \cite{CCLTqocwps}.
\end{rem}

\section{About the well-formed condition}

As we explain in the introduction after Theorem \ref{thm:intro}, we have not used that the
weights $\mbf{p}$ are well--formed. The proposition below justify this well--formed
assumption.

\begin{prop}\label{prop:iso,mirror,fibration}
  Let $(\mbf{w},\mbf{p})$ and $(\mbf{w'},\mbf{p'})$ two pairs of weights such that
  $\gcd(\mbf{p})=\gcd(\mbf{p}')=1$. If the toric orbifolds $\mathcal{X}(\mbf{w},\mbf{p})$
  and $\mathcal{X}(\mbf{w'},\mbf{p'})$ are isomorphic then there exists two isomorphisms
  $g$ and $h$ such that the following diagram is commutative:
  \begin{equation}
    \xymatrix{
      \mathcal{Y}(\mbf{p})\ar[rr]^{g} \ar[dd]^{\pi_{\mbf{p}}}
      \ar[rd]^{f_{\mbf{w}}}&&\mathcal{Y}(\mbf{p'})\ar[dl]_{f_{\mbf{w'}}}\ar[dd]^{\pi_{\mbf{p'}}}\\ 
      & \qq& \\ 
      \mathcal{C}(\mbf{p})\ar[rr]^{h}&&\mathcal{C}(\mbf{p'})}
  \end{equation}
\end{prop}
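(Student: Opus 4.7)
The plan is to decompose the given orbifold isomorphism into elementary moves on pairs $(\mbf{w},\mbf{p})$ with $\gcd(\mbf{p})=1$ and to lift each move to an isomorphism of mirror fibrations. The relevant moves are (A) permutations of indices and (B) the move of Remark \ref{rem:iso,w,p}. Move (A) lifts trivially, since all the ingredients $T, S, I, f_{\mbf{w}}$ of the construction are natural under reindexing; the substantive step is move (B).

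For (B), I would first construct $h$. Assuming $\gcd(a,p_n)=1$, the identity $\lcm(ap_i,p_n)=a\lcm(p_i,p_n)$ yields $T_L=\tfrac{1}{a}T_R$ as submonoids of $\qq$, so the dilation $\gamma\mapsto a\gamma$ induces a ring isomorphism $\qq[T_L]\xrightarrow{\sim}\qq[T_R]$, giving $h:\mathcal{C}(\mbf{p}_L)\xrightarrow{\sim}\mathcal{C}(\mbf{p}_R)$. Then I would build $g$ starting from the torus-level map $(y_0,\ldots,y_n)\mapsto(y_0,\ldots,y_{n-1},y_n^a)$ between the generic fibres $\{\prod y_i^{p_i}=1\}$ and $\{y_0^{ap_0}\cdots y_{n-1}^{ap_{n-1}}y_n^{p_n}=1\}$: it is a torus isomorphism (bijectivity follows from Bézout applied to $\gcd(a,p_n)=1$) and pulls $\sum x_i^{w_i}$ back to $\sum_{i<n}y_i^{w_i}+y_n^{aw_n}$. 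I would then extend this to a ring isomorphism $\qq[T_L\oplus S_L]/I_L\xrightarrow{\sim}\qq[T_R\oplus S_R]/I_R$ by exploiting Lemma \ref{prop:2}: the arithmetic identity $\overline{kap_j}^{ap_i}=a\overline{kp_j}^{p_i}$ for $j<n$, together with the bijectivity of multiplication by $a$ on $\zz/p_n$ (from $\gcd(a,p_n)=1$), produces a matching of the generating sets of $S_L$ and $S_R$ compatible with the relations defining $I_L$ and $I_R$.

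With (A) and (B) lifted, the proof concludes by a standard reduction. Any pair with $\gcd(\mbf{p})=1$ can be reduced to a well-formed one by finitely many moves of types (A) and (B): if $\mbf{p}$ is not well-formed, some prime $q$ divides all $p_j$ with $j\neq i$ but not $p_i$, and the move with $a=q$ (after the appropriate permutation) strictly decreases $\sum p_j$. Applying this to both $(\mbf{w},\mbf{p})$ and $(\mbf{w}',\mbf{p}')$ yields well-formed pairs with still-isomorphic orbifolds, hence isomorphic coarse moduli spaces. Since isomorphic well-formed weighted projective spaces have their weight vectors equal up to permutation (a classical fact), the two reduced $\mbf{p}$'s agree after some $\sigma$, and the same $\sigma$ matches the two reduced $\mbf{w}$'s because these are the intrinsic multiplicities of the toric divisors. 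Composing the isomorphisms coming from the chain of moves then produces $g$ and $h$.

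The principal obstacle is the extension step in (B): the explicit matching of $\qq[T_L\oplus S_L]/I_L$ with $\qq[T_R\oplus S_R]/I_R$ through Lemma \ref{prop:2}(a) requires careful combinatorial bookkeeping, because $S_L$ is strictly finer than $S_R$ in its last coordinate and the isomorphism must respect the relations involving $\gamma$ and $\alpha$.
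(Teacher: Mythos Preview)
Your approach is essentially the same as the paper's: both reduce to the elementary move of Remark~\ref{rem:iso,w,p} and lift it via the dilation on $T$ and the rescaling of the last $S$-coordinate, checking well-definedness and bijectivity through the identity $\overline{kap_j}^{ap_i}=a\,\overline{kp_j}^{p_i}$ (the paper isolates this as Lemma~\ref{lem:comb}) together with the explicit generators of $S$ from Lemma~\ref{prop:2}(a). You are somewhat more explicit than the paper about the global reduction step (permutations, reduction to well-formed, uniqueness of well-formed weights, identification of the $w_i$ as divisor multiplicities), which the paper compresses into a single sentence invoking Remark~\ref{rem:iso,w,p} and the discussion at the end of Section~\ref{sec:toric-stacks-polyt}.
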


We start with a combinatorial Lemma.
\begin{lem}\label{lem:comb}
  For any $i\in\set{0, \ldots ,n-1}$ we have that for any $(j,k)\in\set{0, \ldots
    ,n-1}\times \nn$
  \begin{equation}
    \frac{1}{p_{i}}\overline{kp_{j}}^{p_{i}}=
    \frac{1}{ap_{i}}\overline{kap_{j}}^{ap_{i}}
  \end{equation}
\end{lem}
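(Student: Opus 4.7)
The plan is to reduce the claim to a single observation about Euclidean division: scaling both the dividend and the divisor by the same positive integer scales the remainder while preserving the quotient. This is elementary but encapsulates exactly what the lemma states once both sides are multiplied by $p_{i}$ and $ap_{i}$ respectively.

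Concretely, I would write the Euclidean division of $kp_{j}$ by $p_{i}$ as
\begin{equation*}
kp_{j} = q\, p_{i} + r,\qquad 0\leq r < p_{i},
\end{equation*}
so that by definition $r=\overline{kp_{j}}^{p_{i}}$. Multiplying the displayed equation by $a$ gives
\begin{equation*}
kap_{j} = q\,(ap_{i}) + ar,\qquad 0\leq ar < ap_{i},
\end{equation*}
which is the Euclidean division of $kap_{j}$ by $ap_{i}$. Therefore $\overline{kap_{j}}^{ap_{i}}=ar$, and dividing by $ap_{i}$ yields $\frac{1}{ap_{i}}\overline{kap_{j}}^{ap_{i}} = \frac{r}{p_{i}} = \frac{1}{p_{i}}\overline{kp_{j}}^{p_{i}}$, which is exactly the claim.

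There is no serious obstacle: the restriction $i,j\in\{0,\ldots,n-1\}$ plays no role in the argument, it merely records the range in which the lemma will be invoked later (presumably in the proof of Proposition \ref{prop:iso,mirror,fibration} to match generators $S_{i}$ from Lemma \ref{prop:2}(a) across the isomorphism of Remark \ref{rem:iso,w,p}). The only minor point worth flagging is that the uniqueness part of the Euclidean division algorithm is used implicitly to identify the remainder of $kap_{j}$ modulo $ap_{i}$ as $ar$; this is legitimate precisely because $0\leq ar<ap_{i}$ holds whenever $0\leq r<p_{i}$ and $a\in\nn_{>0}$.
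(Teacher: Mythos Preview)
Your proof is correct and essentially identical to the paper's own argument: both write the Euclidean division of $kp_{j}$ by $p_{i}$, observe that multiplying through by $a$ yields the Euclidean division of $kap_{j}$ by $ap_{i}$, and invoke uniqueness of the remainder. The paper reduces to $i=0$ first and phrases the comparison by writing both divisions separately before appealing to uniqueness, while you multiply the single equation directly; this is a cosmetic difference only.
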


\begin{proof}Without loss of generality, we can assume that $i=0$. There exists unique
  $(q,\overline{kp_{j}}^{p_{0}})\in\nn\times \set{0, \ldots ,p_{0}-1}$ and unique
  $(q',\overline{kap_{j}}^{p_{0}})\in\nn\times \set{0, \ldots ,ap_{0}-1}$ such that $
  kp_{j}=qp_{0}+\overline{kp_{j}}^{p_{0}}$ and $
  akp_{j}=q'ap_{0}+\overline{kap_{j}}^{ap_{0}}$.  By uniqueness, we deduce that $aq'=q$
  and $a.\overline{kp_{j}}^{p_{0}}=\overline{kap_{j}}^{ap_{0}}$. This finishes the proof.
\end{proof}

\begin{proof}[Proof of Proposition \ref{prop:iso,mirror,fibration}] According to the
  discussion at the end of Section \ref{sec:toric-stacks-polyt} and Remark
  \ref{rem:iso,w,p}, it is enough to prove the proposition for the weights
  $(\mbf{w},ap_{0}, \ldots ,ap_{n-1},p_{n})$ and $(w_{0}, \ldots ,w_{n-1},aw_{n},\mbf{p})$
  where $\gcd(p_{0}, \ldots ,p_{n})=1$ and $\gcd(a,p_{n})=1$.

  We will see that the isomorphism $g:\mathcal{Y}(ap_{0}, \ldots ,ap_{n-1},p_{n}) \to
  \mathcal{Y}(p_{0}, \ldots ,p_{n})$ sends $(z_{0}, \ldots ,z_{n})$ to $(z_{0}, \ldots
  ,z_{n-1},z_{n}^{1/a})$ and $h:\mathcal{C}(ap_0,\ldots,ap_{n-1},p_n)\to
  \mathcal{C}(p_0,\ldots,p_n)$ sends $t$ to $t^{1/a}$.

  In our notation, we will stress for which family of weights we compute $S,T,\ldots$. We
  define a morphism of semigroups $h^{\ast}: T(p_0,\ldots,p_n)\to
  T(ap_0,\ldots,ap_{n-1},p_n)$ that sends $\gamma$ to ${\gamma}/{a}$.  For any
  $i,j\in\{0,\ldots,n-1\}$, we have $\lcm(ap_i,ap_j)=a\lcm(p_i,p_j)$ and for any
  $i\in\{0,\ldots,n\}$, we have $\lcm(ap_i,p_n)=a\lcm(p_i,p_n)$.  We deduce that
  $h:\mathcal{C}(ap_0,\ldots,ap_{n-1},p_n)\to \mathcal{C}(p_0,\ldots,p_n)$ is
  well--defined and is an isomorphism.

  We define the morphism of semigroups :
  \begin{align}
    \phi^{\ast} :S(p_0,\ldots,p_n) &\to S(ap_0,\ldots,ap_{n-1},p_n) \\
    (b_0,\ldots,b_n)&\mapsto \left(b_0,\ldots,b_{n-1},\frac{b_n}{a}\right).\nonumber
  \end{align}
  We will show that $\phi^{\ast}$ is an isomorphism.  To prove that $\phi^{\ast}$ is
  well--defined, we show that:
  \begin{enumerate}
  \item\label{item:2} for $i\in\{0,\ldots,n\}$, we have
    $\phi^{\ast}(S_i(p_0,\ldots,p_n))\subset S_i(ap_0,\ldots,ap_{n-1},p_n)$
  \item\label{item:3} and for $i\in\set{0,\ldots,n}$, we have $\phi^{\ast}(\mbf{e}_i)\in
    S(ap_0,\ldots,ap_{n-1},p_n)$.
  \end{enumerate}
  (a). For $i=n$, it is obvious. For the case $i\neq n$, one can assume, without loss of
  generality, that $i=0$. Let $ \frac{1}{p_{0}}(0,\overline{kp_{1}}^{p_{0}}, \ldots
  ,\overline{kp_{n}}^{p_{0}})$ be a generator of $S_0(p_0,\ldots,p_n)$.  Let $u,v\in \nn$
  such that $up_n-va=1$. As we have that for any $t\in\nn$
  \begin{equation}
    (k+tup_0)p_n=\left(\left\lfloor\frac{kp_n}{ap_0}\right\rfloor+v\right)ap_0+\overline{kp_{n}}^{ap_{0}}+tp_0,
  \end{equation}
  we deduce that from there exists $t\in \nn$ such that
  $0\leq\overline{(k+tup_0)p_n}^{ap_0}\leq p_0 -1$.  Putting $k':=k+tup_{0}$, we have that
  $\overline{k'p_n}^{ap_0}=\overline{kp_n}^{p_0}$ and by Lemma \ref{lem:comb} that for
  $j\in\{0, \ldots ,n-1\}$ $\overline{ak'p_{j}}^{ap_{0}}=\overline{akp_{j}}^{ap_{0}}$.
  We deduce that $\phi^{\ast}(S_0(p_0,\ldots,p_n))\subset S_0(ap_0,\ldots,ap_{n-1},p_n)$.

  \vskip\baselineskip
  \noindent{}(b). For $i\in\set{1,\ldots,n-1}$, we have $\phi^{\ast}(\mbf{e}_i)\in
  S(ap_0,\ldots,ap_{n-1},p_n)$. For $i=n$, we put $k=up_0$ where $u,v\in \nn$ are the
  Bezout coefficients (i.e.  $up_n-va=1$), we deduce that
  \begin{equation}
    \phi^{\ast}(\mbf{e}_n)=\frac{1}{ap_0}\left(0,\overline{kap_1}^{ap_0},\ldots,\overline{kap_{n-1}}^{ap_0},\overline{kp_n}^{ap_0}\right).
  \end{equation}
  We conclude that $\phi^\ast$ is well--defined.

  The morhism $\phi^{\ast}$ is clearly injective and its image is contained in
  $S(ap_0,\ldots,ap_{n-1},p_n)$. It hence suffices to show that it is also surjective. It
  is obvious that $S_n(ap_0,\ldots,ap_{n-1},p_n)\subset \im \phi^\ast$.  Let
  $\frac{1}{ap_{0}}(0,\overline{kap_{1}}^{ap_{0}}, \ldots
  ,\overline{kap_{n-1}}^{ap_{0}},\overline{kp_{n}}^{ap_{0}})$ be a generator of
  $S_0(ap_0,\ldots,ap_{n-1},p_n)$ and consider the Euclidean division $
  \overline{kp_n}^{ap_0}=qp_0+\overline{kp_n}^{p_0}$ with $0\leq\overline{kp_n}^{p_0}<
  p_0$, by Lemma \ref{lem:comb}. We deduce that
  \begin{equation*}
    \phi^\ast\left(\frac{1}{p_0}\left(0,\overline{kp_1}^{p_0},\ldots,\overline{kp_n}^{p_0}\right)
      +(0,\ldots,0,q)\right)=\frac{1}{ap_{0}}\left(0,\overline{kap_{1}}^{ap_{0}}, \ldots
      ,\overline{kap_{n-1}}^{ap_{0}},\overline{kp_{n}}^{ap_{0}}\right).
  \end{equation*}
  By the same argument, we deduce that $S_j(ap_0,\ldots,ap_{n-1},p_n)\subset \im
  \phi^\ast$.  We conclude that $\phi^\ast$ is an isomorphism.  We define the isomorphism
  of rings $\widetilde{g}^\ast:=(h^\ast,\phi^\ast):\qq[(T\oplus S)(p_0,\ldots,p_n)] \to
  \qq[(T\oplus S)(ap_0,\ldots,ap_{n-1}, p_n)]$.

  As for any $b\in S$ we have $h^{\ast}(\gamma(b))=\gamma(\phi^{\ast}(b))$ and $
  \phi^{\ast}(\alpha(b))=\alpha(\phi^{\ast}(b))$, we deduce that $\widetilde{g}^\ast$
  induces an isomorphism of rings
  \begin{equation}
    g^\ast:  \qq[(T\oplus
    S)(p_0,\ldots,p_n)]/I(p_0,\ldots,p_n) \to \qq[(T\oplus
    S)(p_0,\ldots,p_n)]/I(ap_0,\ldots,ap_{n-1},p_n)
  \end{equation}
  The induced morphism of schemes $g: \mathcal{Y}(ap_{0}, \ldots ,ap_{n-1},p_{n}) \to
  \mathcal{Y}(p_{0}, \ldots ,p_{n})$ sends $(z_{0}, \ldots ,z_{n})$ to $(z_{0}, \ldots
  ,z_{n-1},z_{n}^{1/a})$ is an isomorphism such that the diagram of Proposition
  \ref{prop:iso,mirror,fibration} is commutative.

\end{proof}
\section{Examples}
\label{sec:chow-rings-as}

In this section we have used {\sc Singular} {\cite{Singular})} to embed the fibration
\eqref{eq:23} into affine spaces. We illustrated for two different cases:
$\mathbf{p}=(2,3,5)$ and $\mathbf{p}=(p_0,\dots,p_n)$ where $p_0=1$ and $p_i$ divides
$p_{i+1}$. We begin with the latter.\newline

\noindent{\bf Case $\pp(\mathbf{p})$ with $p_i|p_{i+1}$.} Set $p_{i}=d_ip_{i-1}$. Let
$v_i=z_0^{p_0/p_i}\dots z_{i-1}^{p_{i-1}/p_i}$ for $i=1,\dots,n$. The monomials $v_i$
correspond to the generators of $S$ described in \ref{prop:2}, (a). We thus have the
relations $v_{i+1}^{d_{i+1}}-v_{i}z_{i}$, $i=1,\dots,n-1$ and it is easy to see that in
fact these generate all the relations between the monomials in $v_i$ and $z_i$. On the
other hand, we have $\qq[T]=\qq[t^{1/p_n}]\simeq\qq[s]$ and the ideal $I\in\qq[T\oplus S]$
is generated by one single element, namely $v_nz_n-s=0$. Then
\begin{equation}
  \label{eq:27} \mathcal{X}
  =\set{v_1^{d_1}=z_0,v_2^{d_2}=v_1z_1,v_3^{d_3}=v_{2}z_2,\dots,v_n^{d_n}=v_{n-1}z_{n-1},v_nz_n=s}
  \hookrightarrow \aa^{2n+2}
\end{equation} and $\pi:\mathcal{X}\ra\aa^1$ is the restriction of the
projection onto the $s$-line.  \\

\noindent{\bf Case $\pp(2,3,5)$.} To describe $\qq[S]$ for the case $\mathbf{p}=(2,3,5)$,
consider the monomials
\begin{equation}
  \label{eq:28} w_1=z_0^{2/5}z_1^{3/5}, w_2=z_0^{4/5}z_1^{1/5},
  w_3=z_0^{1/5}z_1^{4/5},w_4=z_0^{3/5}z_1^{2/5}
\end{equation} with relations
\begin{equation}
  \label{eq:29}
  \begin{array}{c} w_1^2=w_2z_1,w_1w_2=w_3z_0,w_1w_3=w_4z_1,
    w_1w_4=z_0z_1\\ w_2
  \end{array}
\end{equation} Similarly, we have monomials $u_1$ and $v_1$
corresponding to the elements with denominator $1/2$ and $1/3$ with
relations
\begin{equation}
  \label{eq:30} u_1^2=z_1z_2,v_1^3=z_0z_3.
\end{equation} On the other hand we have
$\qq[T]=\qq[t^{1/6},t^{1/10},t^{1/15}]$ so that we have the embedding
$\mathcal{C}\hookrightarrow\aa^3$ as the rational curve
\begin{equation}
  \label{eq:31} s_1=s_2s_3,s_2^3=s_1s_3^2,s_3^3=s_2^2
\end{equation} Finally, the ideal $I$ is given by
\begin{equation}
  \label{eq:32}
  \begin{array}{c} u_1v_1=s_1\\
    u_1w_1=s_2w_2,u_1w_2=s_2w_4,u_1w_3=s_2z_1,u_1w_4=s_2w_1\\
    v_1w_1=s_3w_4,v_1w_2=s_3z_0,v_1w_3=s_3w_1,v_1w_4=s_3w_2
  \end{array}
\end{equation} Therefore $\mathcal{X}\hookrightarrow\aa^{12}$ is
defined by the equations \eqref{eq:29}, \eqref{eq:30}, \eqref{eq:31}
and \eqref{eq:32}, with the fibration $\pi:\mathcal{X}\ra\mathcal{C}$
given by the projection onto the $(s_1,s_2,s_3)$-space.

Notice that in any of the above cases we can obtain presentations of
$A_\orb^*(\mathcal{X}(\mathbf{w},\mathbf{p}))$ by setting $s=0$ and adding the equations
\begin{equation}
  \label{eq:33}
  \frac{w_i}{p_i}z_i^{w_i}-\frac{w_{i+1}}{p_{i+1}}z_i^{w_{i+1}},
  i=0,\dots,n-1.
\end{equation}

\begin{ack}
  The first author is grateful to B.~Dubrovin and B.~Fantechi for making it possible to
  visit the Scuola Internazionale Superiore di Studi Avanzati where this article was
  started.
\end{ack}

\providecommand{\bysame}{\leavevmode\hbox to3em{\hrulefill}\thinspace}
\providecommand{\MR}{\relax\ifhmode\unskip\space\fi MR }
\providecommand{\MRhref}[2]{%
  \href{http://www.ams.org/mathscinet-getitem?mr=#1}{#2}
}
\providecommand{\href}[2]{#2}

{\it
  \begin{tabular}{cc}
    \parbox{.4\textwidth}{
      Ignacio de Gregorio \\
      Mathematics Institute \\
      University of Warwick \\
      CV4 7AL, Coventry \\
      United Kingdom \\
      Tel.: +44 2476 524 402 \\
      {\tt I.de-Gregorio@warwick.ac.uk}\\}
    &
    \parbox{.5\textwidth}{
      {\'E}tienne Mann\\
      Institut de Math{\'e}matiques et Mod{\'e}lisation\\
      Universit{\'e} Montpellier 2\\
      Case Courrier 051\\
      Place Eug{\'e}ne Bataillon\\
      34095 Montpellier Cedex\\
      France \\
      {\tt emann@math.univ-montp2.fr}}
  \end{tabular}
}


\begin{thebibliography}{CCLT06}

\bibitem[AGV02]{AGVaoqc} Dan Abramovich, Tom Graber, and Angelo Vistoli, \emph{Algebraic
    orbifold quantum products}, Orbifolds in mathematics and physics (Madison, WI, 2001),
  Contemp. Math., Amer. Math. Soc., Providence, RI, 2002, pp.~1--24.

\bibitem[Bar00]{Bms} Serguei Barannikov, \emph{{Semi-infinite Hodge structures and mirror
      symmetry for projective spaces}}, arXiv.Math.AG/0010157 (2000), 17.

\bibitem[BCS05]{BCSocdms05} Lev~A. Borisov, Linda Chen, and Gregory~G. Smith, \emph{{The
      orbifold Chow ring of toric Deligne-Mumford stacks}}, J. Amer. Math. Soc. (2005),
  no.~1, 193--215 (electronic).

\bibitem[CCLT06]{CCLTqocwps} Tom Coates, Alessio Corti, Yuan-Pin Lee, and Hsian-Hua Tseng,
  \emph{The quantum orbifold cohomology of weighted projective space}, math.AG/0608481
  (2006).

\bibitem[CR04]{CRnco} Weimin Chen and Yongbin Ruan, \emph{A new cohomology theory of
    orbifold}, Comm.  Math. Phys. (2004), no.~1, 1--31.

\bibitem[dG06]{IDGdd-06} Ignacio de~Gregorio, \emph{Deformations of functions and
    {$F$}-manifolds}, Bull. London Math. Soc. \textbf{38} (2006), no.~6, 966--978.

\bibitem[FMN07]{FMNts-07} Barbara Fantechi, Etienne Mann, and Fabio Nironi, \emph{Smooth
    toric dm stacks}, arXiv.AG:0708.1254 (2007).

\bibitem[Giv98]{Gmttci98} Alexander Givental, \emph{A mirror theorem for toric complete
    intersections}, Topological field theory, primitive forms and related topics (Kyoto,
  1996), Progr. Math., vol. 160, Birkh\"auser Boston, Boston, MA, 1998, pp.~141--175.

\bibitem[GPS05]{Singular} G.-M. Greuel, G. Pfister, and H. Sch\"onemann.  {\sc Singular}
  3.0. A Computer Algebra System for Polynomial Computations. Centre for Computer Algebra,
  University of Kaiserslautern (2005). {\tt http://www.singular.uni-kl.de}.


\bibitem[Man06]{mann-2006} Etienne Mann, \emph{Orbifold quantum cohomology of weighted
    projective spaces}, arXiv.math.AG/0610965 (2006).




\end{thebibliography}
\end{document}